\newcommand{\R}{\mathbb{R}}
\numberwithin{equation}{section}
\newtheorem{theorem}{Theorem}[section]
\newtheorem{lemma}[theorem]{Lemma}
\theoremstyle{definition}
\newtheorem*{nota}{Notation}
\begin{document}

\title{Embedding Heegaard Decompositions}

\author{Ian Agol}
\address{\hskip-\parindent
	Ian Agol\\
	Department of Mathematics\\
	University of California, Berkeley\\
	Berkeley, CA 94720\\}
\email{ianagol@math.berkeley.edu}

\author{Michael Freedman}
\address{\hskip-\parindent
	Michael Freedman\\
    Microsoft Research, Station Q, and Department of Mathematics\\
    University of California, Santa Barbara\\
    Santa Barbara, CA 93106\\}
\email{mfreedman@math.ucsb.edu}

\begin{abstract}
A smooth embedding of a closed $3$-manifold $M$ in $\mathbb{R}^4$ may generically be composed with projection to the fourth coordinate to determine a Morse function on $M$ and hence a Heegaard splitting $M=X\cup_\Sigma Y$.  However, starting with a Heegaard splitting, we find an obstruction coming from the geometry of the curve complex $C(\Sigma)$ to realizing a corresponding embedding $M\hookrightarrow \mathbb{R}^4$.
\end{abstract}

\maketitle

\section{Introduction}

The tools for showing that a closed 3-manifold $M$ does not smoothly embed in $\R^4$ seem rather primitive. There does not seem to be any known 3-manifold $M$ which embeds in some integral homology 4-sphere $\Sigma^4$ and is known not to embed in $\R^4$. But tools for this would be highly desirable since Budney and Burton's  3-manifold survey turns up four examples of closed 3-manifolds $M$ embedding in a homotopy 4-sphere for which no embedding in $\R^4$ is known \cite[\textsection 4]{Budney}. This raises the possibility that 3-manifold embeddings could be used to detect exotic structures (if there are homotopy 4-balls which do not embed in $\mathbb{R}^4$).

Our goal here is to find a bridge between the rich subject of surface dynamics, e.g. the mapping class group, and embeddability in the hope that the coordinate structure of $\R^4$ will make an essential appearance. We are partially successful. We find a robust connection between the very coarse "handlebody metric" $d_H$ on the curve complex $C(\Sigma)$ recently studied in \cite{MS} and embeddability of the corresponding Heegaard decompositions. Said another way, we turn $d_H$ into an obstruction to embedding $f: M \hookrightarrow \R^4$ with the fourth coordinate already prescribed. We have known this result since 2012 but have been unable to accomplish the obvious next step: find some residual obstruction which is independent of the fixed 4\textsuperscript{th} coordinate function, i.e. define a true embedding obstruction based on surface dynamics. Since \cite{MS} has now appeared in print and our argument provides a simple application, possibly with yet unrealized potential, we present it here.

Assume that we have been given a Morse function $f:M\rightarrow \mathbb{R}$, how can we show it is not the fourth coordinate of any embedding in $\mathbb{R}^4$? Actually, the obstruction we formulate will not make use of the entire data of a Morse function but merely the Heegaard decomposition $M=X\cup_\Sigma Y$ canonically determined (up to isotopy) by $f$.  The handlebody $X$ is a neighborhood of the ascending manifolds of critical points of index $=2$ and $3$ and $Y$ is a neighborhood of the descending manifold of critical points of index $0$ and $1$.

Our chief tool is the curve complex $C(\Sigma)$ \cite{Harvey} and its metrics.  The vertices of $C(\Sigma)$ are isotopy classes of simple closed curves (sccs) on $\Sigma$, and Hempel \cite{Hempel} introduced the metric $d$, the largest metric where disjoint sccs have distance $=1$.  We will exploit a much coarser metric $d_H$, ``handlebody distance,'' defined as the largest metric where any two sccs bounding disks in the same handlebody $H$, $\partial H=\Sigma$, have distance $=1$.  This distance is easily seen to be quasi-isometric to the ``electrification metric'' $d_E$ recently introduced in \cite{MS}, where it is proved that $\text{diam}_{d_E}(C(\Sigma))=\infty$, for genus $\Sigma\ge 2$.  So for us, a key fact will be

\begin{equation}
\text{for genus }\Sigma\ge 2, \text{ diam}_{d_H}(C(\Sigma))=\infty.
\end{equation}

Let $D(X)\subset C(\Sigma)$ be the set of sccs bounding disks in the handlebody $X=\partial \Sigma$.

We prove the following:

\vspace{.1in}
\noindent{\bf{Theorem 3.1.}}  \emph{If} $M$ \emph{embeds in} smoothly $\mathbb{R}^4$ \emph{with one (say, the fourth) coordinate determining the Heegaard decomposition} $M=X\cup_\Sigma Y$\emph{, then }$d_H(D(X),D(Y))\le 2$.
\vspace{.1in}

We actually supply two proofs (using slightly different techniques, yielding slightly different constants, and supporting different generalizations).

\section{Ambient Morse Theory}

This section recalls an ``ambient'' version of Morse theory appropriate to embedded submanifolds $M^3\hookrightarrow \mathbb{R}^4$.

When speaking of an embedding $f: M \hookrightarrow \R^4$, we will feel free to change the target space to $S^4$ or $S^3 \times \R$, by adding or deleting a point, without renaming the map or calling other attention to the change.

Suppose we are given a codimension-1 smooth embedding $g:M^3\hookrightarrow \mathbb{R}^4$ of a closed connected $3$-manifold with fourth coordinate $g_4=f$.  Using an elementary general position argument, one constructs an isotopy from $g$ to $g'$ with the critical points of the fourth coordinate $g'_4$ occurring in \emph{order} (higher index critical points take larger values).  Such Morse functions will be called \emph{ordered}, see Lemma \ref{ordered} for a sketch of proof.

Some choices are made in this procedure which could influence the order of handle attachments but not the diffeomorphism type of the Heegaard decomposition $M=X\cup_\Sigma Y$, where $Y=\cup$ (handles of index $=0,1$) and $X=\cup$ (handles of index $=2,3$). The topology of $X$ ($Y$) relative to $\Sigma$ is, however, independent of any choices.

\begin{lemma}\label{schm:2.2}
If $M$ is an embedding in $\mathbb{R}^4$ and $M=X\cup_\Sigma Y$ is the Heegaard splitting associated to the fourth coordinate with $\Sigma\subset \mathbb{R}^3\times 0\subset \mathbb{R}^4$, then provided $\mathrm{genus}(\Sigma) \geq 1$, $D(X)$ and $D(Y)$ both contain at least one scc which compresses in $\mathbb{R}^3\times 0$.\qed
\end{lemma}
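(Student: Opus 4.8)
The plan is to recast the statement as a fact about handlebodies properly embedded in a $4$-ball, and then to extract the required compressing curves by a loop-theorem argument performed in a closed $3$-manifold obtained by capping off — with the embedding in $S^4$ entering to rule out the bad configuration.

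First I would normalize the picture. Compactifying $\mathbb{R}^4$ to $S^4$, the closures of the half-spaces $\mathbb{R}^3\times[0,\infty)$ and $\mathbb{R}^3\times(-\infty,0]$ become $4$-balls $B_+,B_-$ meeting along $S^3$, the compactification of $\mathbb{R}^3\times 0$. Since $f=g_4$ is ordered (Lemma~\ref{ordered}), $X$ is a handlebody properly embedded in $B_+$ with $\partial X=\Sigma\subset S^3$, and $Y$ is a handlebody properly embedded in $B_-$ with $\partial Y=\Sigma$. The connected surface $\Sigma$ splits $S^3$ into two compact submanifolds with boundary $\Sigma$: the bounded piece $U$ of $(\mathbb{R}^3\times 0)\setminus\Sigma$, and the closure $V'$ of the unbounded piece together with the point at infinity. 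The hypotheses are invariant under $x_4\mapsto -x_4$, which exchanges $X\leftrightarrow Y$ and $B_+\leftrightarrow B_-$ while fixing $\Sigma$, $U$ and $V'$; so it is enough to produce an essential scc $\alpha\subset\Sigma$ that bounds a disk in $X$ and also bounds a disk in $U$ or in $V'$, the statement for $Y$ following by symmetry.

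Next, cut $B_+$ along $X$: the handlebody separates $B_+$ into two compact $4$-manifolds, $W$ (with $W\cap S^3=U$) and $W'$ (with $W'\cap S^3=V'$), so that $\partial W=X\cup_\Sigma U$, $\partial W'=X\cup_\Sigma V'$, and $W\cup_X W'=B_+\cong B^4$. Hence $N:=X\cup_\Sigma U$ and $N':=X\cup_\Sigma V'$ are closed orientable $3$-manifolds, each of which embeds in $S^4$ (indeed each bounds a $4$-manifold lying inside $B^4$). Because $X$ is a handlebody of positive genus, $\pi_1(\Sigma)\to\pi_1(X)$ has nontrivial kernel, so a fortiori $\pi_1(\Sigma)\to\pi_1(N)$ and $\pi_1(\Sigma)\to\pi_1(N')$ do, and the loop theorem applies to the two-sided surface $\Sigma$ inside $N$ (or $N'$), yielding an essential scc that compresses $\Sigma$ on one of its two sides. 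The heart of the argument is to arrange that this curve compresses \emph{simultaneously} on the $X$-side and on the $U$- or $V'$-side: starting from a complete meridian system of $X$ and a maximal system of compressing disks for $\Sigma$ inside $N$, I would put them in minimal position and run an innermost-disk exchange, obtaining a curve bounding a disk in $X$ which one then checks still bounds a disk in $U$ or in $V'$. This is the step where the $4$-dimensional hypothesis must be used, via the embedding $W\subset B^4$ (equivalently $N\subset S^4$): one would invoke it — for instance through half-lives-half-dies for the pair $(W,N)$ together with the description of $H_1(N)$ as the cokernel of $H_1(\Sigma)\to H_1(X)\oplus H_1(U)$, or through some other embedding obstruction — to exclude the possibility that $D(X)$ misses both $D(U)$ and $D(V')$.

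The step I expect to be the main obstacle is exactly this matching of sides. Loop-theorem and innermost-disk considerations readily produce curves that compress in $X$ (automatic, as $X$ is a handlebody) and, independently, curves that compress in $U$ or in $V'$; but coercing a \emph{single} curve to do both is genuinely a $4$-dimensional phenomenon — for an abstract pair $X\cup_\Sigma U$ with $X$ a handlebody and $U$ an arbitrary compact $3$-manifold it can fail. Two further points will require care: the intermediate level sets $g_4^{-1}(t)$ can be disconnected, so "$U$" should be read as the union of the bounded complementary regions and the capping and loop-theorem arguments carried out componentwise; and $U$ or $V'$ may not be a handlebody (it could be, say, a knot exterior), so the compressing disk must be produced directly rather than by citing a standard meridian system.
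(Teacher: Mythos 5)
Your normalization (compactify to $S^4$, view $X$ as a handlebody properly embedded in the $4$-ball $B_+$ with $\partial X=\Sigma\subset S^3$, cut $B_+$ along $X$ into $W,W'$ with $\partial W=X\cup_\Sigma U$, $\partial W'=X\cup_\Sigma V'$) is correct, but the proposal has a genuine gap at exactly the step you flag as the main obstacle, and that step is the entire content of the lemma. The loop theorem applied to $\Sigma\subset N=X\cup_\Sigma U$ is perfectly happy to return a curve compressing only into $X$ (which is automatic and useless), and half-lives-half-dies for $(W,N)$ is a statement about ranks of maps on rational $H_1$; it gives no control over which \emph{embedded} curves bound \emph{embedded} disks on which side of $\Sigma$, so "one would invoke it \dots to exclude the possibility that $D(X)$ misses both $D(U)$ and $D(V')$" is not an argument. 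As you yourself observe, for an abstract gluing $X\cup_\Sigma U$ the conclusion can fail outright --- indeed the whole point of the paper is that $D(X)$ can be made arbitrarily $d_H$-far from the disk sets of the complementary regions of $\Sigma$ in $S^3$ --- so the $4$-dimensional input has to carry all the weight, and none is actually supplied. Worse, by reducing to the static statement "a properly embedded handlebody in $B^4$ has an essential meridian compressing in $S^3\setminus\Sigma$," you have discarded the Morse-theoretic level structure, which is precisely the data that makes the lemma cheap; recovering the statement from the static picture would essentially require re-proving an ambient Morse / sweep-out argument anyway.

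The paper's proof is a one-line dynamic argument that makes the simultaneity you are trying to engineer a posteriori come for free. Slide $\Sigma$ up the gradient lines of $g_4|_M$, so that the level surfaces $\Sigma_t=M\cap(\mathbb{R}^3\times t)$ sweep through the horizontal slices; between critical values the isotopy type of the pair $(\mathbb{R}^3,\Sigma_t)$ is constant. Because the Morse function is ordered and $\mathrm{genus}(\Sigma)\ge 1$, the surface must lose genus before it can be capped off by index-$3$ points, so going upward one meets a first index-$2$ critical point whose vanishing cycle $\alpha\subset\Sigma_{c-\epsilon}$ is essential. That single curve does both jobs at once: it bounds the descending disk of the critical point inside $M\cap(\mathbb{R}^3\times[c-\epsilon,c])\subset X$, and (taking $\epsilon$ small, by the local model $x_4=-x_1^2-x_2^2+x_3^2$ and embeddedness of $M$) it bounds a small embedded disk in the slice $\mathbb{R}^3\times(c-\epsilon)$ meeting $M$ only along $\alpha$. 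Flowing back down to level $0$ carries $\alpha$ to an essential scc in $D(X)$ compressing in $\mathbb{R}^3\times 0$; sliding downward and using the first essential index-$1$ ascending sphere does the same for $D(Y)$. If you want to salvage your write-up, replace the loop-theorem/half-lives-half-dies step with this sweep-out; the capping-off to $N$ and $N'$ is not needed.
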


\begin{lemma}\label{schm:2.3}
If $N$ is a $3$-manifold (compact or non-compact) and there is an embedding $M\hookrightarrow N\times \mathbb{R}$ inducing a Heegaard splitting $M=X\cup_\Sigma Y$, $\Sigma\subset N\times 0$, then provided $\mathrm{genus}(\Sigma) \geq 1$, $D(X)$ and $D(Y)$ both contain at least one scc which compresses in $N\times 0$.\qed
\end{lemma}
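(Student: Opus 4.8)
\emph{Proof proposal.}
The plan is to exhibit, directly from the Morse function $f$ given by the $\R$-coordinate of $M\hookrightarrow N\times\R$, a meridian disk of $Y$ whose boundary also bounds a compressing disk for $\Sigma$ inside $N\times 0$. It is enough to do this for $Y$: the assertion for $D(X)$ then follows by applying the argument to $-f$, which is again an ordered Morse function (the ordering is preserved because $f\mapsto -f$ reverses both index and value) and induces the same splitting surface $\Sigma\subset N\times 0$ with $X$ and $Y$ exchanged. To set things up I would normalize $f$ using the ambient handle manipulations behind Lemma~\ref{ordered}: cancelling superfluous $0$-$1$ handle pairs (possible since $Y$ is connected), I may assume the induced handle decomposition of $Y=f^{-1}((-\infty,0])$ has one $0$-handle and exactly $g:=\mathrm{genus}(\Sigma)\ge 1$ $1$-handles, so the $g$ co-core disks form a complete meridian system for $Y$ and the boundary of each is an essential scc on $\Sigma$. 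After a monotone reparametrization of the $\R$-factor and a (height-preserving, hence $f$-preserving) isotopy of $M$, I may further assume that the largest critical value $c$ of $f|_Y$ — necessarily of index $1$ — is close to $0$, that $M\cap (N\times[c+\epsilon,0])$ is a vertical cylinder $\Sigma\times[c+\epsilon,0]$, and that near the corresponding critical point $M$ lies in a standard vertical Morse model with no other part of $M$ meeting a thin vertical column over the image point $n_c\in N$.

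Now take the co-core disk $D$ of this top $1$-handle. It is a small disk near $n_c$ whose interior lies strictly below height $0$; extending it up the vertical cylinder gives a meridian disk $\widehat D$ of $Y$ with $\gamma:=\partial\widehat D\subset\Sigma$ essential, so $\gamma\in D(Y)$. To see that $\gamma$ compresses in $N\times 0$: in the standard model near $n_c$, the surface $\Sigma$ is the waist hyperboloid of the saddle, $\gamma$ is its waist circle, and $D$ is the flat disk capping that waist; under the projection $N\times\R\to N\times 0$, the smallness of $D$ together with the "nothing else of $M$ nearby" condition guarantees that the image $\overline D$ is an embedded disk in $N\times 0$ with interior disjoint from $\Sigma$ and boundary $\gamma$ — a compressing disk for $\Sigma$. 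Hence $\gamma$ is a curve in $D(Y)$ compressing in $N\times 0$, and applying the same to $-f$ produces such a curve in $D(X)$.

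I expect the real work to be the normalization together with the projection step. The meridian disk $\widehat D$ honestly lives in the half-space $N\times(-\infty,0]$, and naively flattening it into $N\times 0$ would only give a singular disk, because the projection collapses the vertical cylinder; the content is that, after putting $M$ in vertical standard form above and near the top index-$1$ critical point, one can instead flatten the small co-core $D$ embeddedly and without hidden intersections with $\Sigma$. The ambient $0$-$1$ cancellations and the "vertical cylinder" straightening are standard in the ambient Morse category, but they must be done carefully so as not to disturb the prescribed $\R$-coordinate beyond an allowed isotopy.
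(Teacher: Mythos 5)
Your overall strategy --- descend to the top critical point of $f|_Y$, take its belt circle as the candidate curve in $D(Y)$, cap it with the small horizontal disk in the nearby slice, and transfer everything to level $0$ after straightening the product region --- is exactly the paper's (one-sentence) argument of ``sliding $\Sigma$ down the gradient lines until the first collapse of an essential scc,'' and the core of it is sound. The projection step is also essentially right, though your hypothesis that ``no other part of $M$ meets a thin vertical column over $n_c$'' is both unachievable in general (sheets of $M$ at distant heights will typically project over $n_c$; for instance the projection $M\to N$ is often surjective, and no isotopy changes that) and unnecessary: all that is needed is that the single slice $\Sigma_{c+\epsilon}$ meets the interior of the small flat disk in $N\times\{c+\epsilon\}$ only in its boundary, which follows from compactness of $M$ and embeddedness near the critical point once $\epsilon$ is small, and this is preserved by the vertical translation to level $0$ because the straightened product region has constant projection to $N$.

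The genuine gap is the normalization step. You assert that ``superfluous $0$--$1$ handle pairs'' can be cancelled ``since $Y$ is connected,'' but connectivity only licenses the \emph{abstract} Morse-theoretic cancellation; here the Morse function is constrained to remain the restriction of the $\R$-coordinate, so the cancellation must be performed by an ambient isotopy of $M$ in $N\times\R$. That is a nontrivial claim about embeddings, not about Morse functions --- note that the paper's appendix proves only the \emph{ordering} of critical points by index and explicitly warns that even reordering within a fixed index can be obstructed by ``nesting.'' You need this step because without it the top $1$-handle may be one that merely attaches a spare $0$-handle to the rest of $Y$, in which case its belt circle bounds a disk in $\Sigma$, is not a vertex of $D(Y)$, and your construction produces nothing. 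So either supply an ambient cancellation argument (showing the hanging ``finger'' formed by the $0$-handle and its cancelling $1$-handle can be retracted without colliding with other sheets of $M$), or follow the paper and continue descending past inessential collapses to the first essential one --- which has its own bookkeeping, since below an inessential collapse the level surfaces acquire sphere components and are no longer isotopic to $\Sigma$, so the identification of the genus-$g$ piece with $\Sigma$ and the transfer of the horizontal compressing disk up to level $0$ must be tracked through those non-product levels.
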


\begin{proof}
	Both lemmas are proven by sliding $\Sigma$ up and down the gradient lines of the Morse function until the first collapse of an essential scc in $\Sigma$ is observed.
\end{proof}

\section{Two Distance Estimates}

\begin{proof}[Proof of Theorem 3.1]
We may compactify horizontal slices to consider the embedding of $M$ as into $S^3\times \mathbb{R}$.  $M=X\cup_\Sigma Y$ and we may assume
$$\Sigma\subset S^3=S^3\times 0\subset S^3\times\mathbb{R}$$
and, since lens spaces do not embed in $\R^4$, without loss of generality that $g(\Sigma)\ge 2$.  Note that $\Sigma\subset S^3$ may not be a Heegaard surface for $S^3$ but by Lemma \ref{schm:2.2} must contain at least one scc of $D(X)$ and one scc of $D(Y)$ (which might be identical) which compress into $S^3$.

\begin{nota}
$S^3=A\cup_\Sigma B$ and $V\subset A$ and $W\subset B$ are maximal compression bodies for $\Sigma$ in $A$ and $B$, respectively (see \cite{CG96} for the definition of a compression body).
\end{nota}

At most one of $V$ and $W$ is a product collar (since $S^3$ is non-Haken).  By Fox \cite[Main Theorem (1)]{Fox98}, $V\cup_\Sigma W$ may be re-embedded into $S^3$ so that the complementary regions $P$ and $Q$ lying in $A$ and $B$ respectively are unions of handlebodies and consequently $H=P\cup_{\partial_-V}V$ and $J=Q\cup_{\partial_-W}W$ are also handlebodies and $S^3=H\cup_\Sigma J$ is a Heegaard decomposition.  By Waldhausen \cite{Waldhausen}, this decomposition is standard; thus, $D(H)\cap D(J)\neq \emptyset$, so

\begin{equation}
    \text{diam}_{d_H}(D(H)\cup D(J))=2.
\end{equation}
(From the definition of $d_H$, $\text{diam}_{d_H}(D(H)\cup D(J))\le 2$, but since $D(H)\cup D(J)$ contains a pair of sccs with homological intersection number $=1$, $\text{diam}_{d_H}(D(H)\cup D(J))>1$.)

But as noted above, both $D(X)$ and $D(Y)$ must meet $D(H)\cup D(J)$, so
$$d_H(D(X),D(Y))\le 2.$$
\end{proof}

\setcounter{theorem}{1}
\begin{theorem}\label{thm:3.2}
Let $N$ be a closed, reducible, $3$-manifold containing no incompressible surface, or $S^3$.  Let $M$ be a closed $3$-manifold embedded in $N\times \mathbb{R}$, with the $\mathbb{R}$-coordinate inducing a Heegaard splitting $M=X\cup_\Sigma Y$.  We have $d_H(D(X),D(Y))\le 3$.
\end{theorem}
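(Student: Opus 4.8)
The plan is to mimic the proof of Theorem 3.1, replacing the single ambient $S^3$ by the more general $N$ and tracking where the extra unit of distance comes from. By Lemma \ref{schm:2.3}, applied to the slice $N\times 0$, both $D(X)$ and $D(Y)$ contain an scc that compresses in $N$. Write $N=A\cup_\Sigma B$ for the two sides of $\Sigma$ in $N$, and let $V\subset A$, $W\subset B$ be maximal compression bodies for $\Sigma$ as in the Notation box. The point of the hypothesis on $N$ (reducible with no incompressible surface, or $S^3$) is twofold: it guarantees that a Fox-type re-embedding theorem is available so that the complements of $V\cup_\Sigma W$ can be arranged to be handlebodies $P$, $Q$, producing handlebodies $H=P\cup_{\partial_-V}V$ and $J=Q\cup_{\partial_-W}W$ with $N=H\cup_\Sigma J$; and it constrains the Heegaard theory of $N$ enough that $D(H)$ and $D(J)$ are within handlebody distance $1$ of each other. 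The first point should follow from Fox's theorem exactly as in the $S^3$ case, using that $N$ is irreducible-free in the appropriate sense; the second point is where $S^3$'s special role (Waldhausen) is relaxed.

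The key steps, in order, are as follows. First, reduce to $g(\Sigma)\ge 2$ and set up $N=A\cup_\Sigma B$ together with the maximal compression bodies $V,W$, noting as before that at most one of them is a product collar since $N$ contains no incompressible surface. Second, invoke Fox \cite{Fox98} to re-embed $V\cup_\Sigma W$ in $N$ with handlebody complements, yielding a Heegaard splitting $N=H\cup_\Sigma J$ with $D(X)$ meeting $D(H)$ and $D(Y)$ meeting $D(J)$ (after possibly swapping $H$ and $J$; both $D(X)$ and $D(Y)$ meet $D(H)\cup D(J)$ as in Theorem 3.1). Third, bound $d_H(D(H),D(J))$: in the $S^3$ case Waldhausen gave $D(H)\cap D(J)\ne\emptyset$, so $\mathrm{diam}_{d_H}(D(H)\cup D(J))=2$ and hence $d_H(D(X),D(Y))\le 2$; for a general $N$ as in the hypothesis we instead use that any two Heegaard splittings of $N$ (or the splitting $H\cup_\Sigma J$ versus a standard genus-zero picture coming from the reducibility) have a common stabilization / the splitting is "standard enough" that a disk of $H$ and a disk of $J$ are disjoint from a common essential curve, giving $d_H(D(H),D(J))\le 1$. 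Combining, $d_H(D(X),D(Y))\le d_H(D(X),D(H))+d_H(D(H),D(J))+d_H(D(J),D(Y))\le 1+1+1=3$.

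The main obstacle is the third step: making precise why $D(H)$ and $D(J)$ are at handlebody distance at most $1$ for the class of manifolds in the hypothesis. For $S^3$ this is Waldhausen's uniqueness of Heegaard splittings, which is exactly the $+1$ saved in Theorem 3.1; for a reducible $N$ with no incompressible surface one expects an analogue — every such $N$ is a connected sum of $S^3$'s with $S^2\times S^1$'s and lens-space-free pieces are excluded by the no-incompressible-surface condition, so its Heegaard splittings should again be essentially standard (Haken's lemma plus Waldhausen-type arguments on the summands). I would isolate this as a lemma: \emph{for $N$ as in the hypothesis, any handlebody splitting $N=H\cup_\Sigma J$ arising from the Fox construction has $d_H(D(H),D(J))\le 1$}, and prove it by using reducibility to find a sphere meeting $\Sigma$ in one curve, compressing along it, and inducting on genus until one lands in the $S^3$ (Waldhausen) case. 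The remaining steps are routine adaptations of the Theorem 3.1 argument; the constant $3$ rather than $2$ is precisely the price of not having Waldhausen's theorem available verbatim for $N$.
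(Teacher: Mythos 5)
Your outline breaks down at its two load-bearing steps, and the paper's actual proof avoids both of them. First, Fox's re-embedding theorem \cite{Fox98} is a statement about compact submanifolds of $S^3$ re-embedded \emph{in $S^3$}; there is no analogue allowing you to re-embed $V\cup_\Sigma W$ inside a general closed reducible $N$ with handlebody complements, and you give no argument for one (nor is it clear that $V\cup_\Sigma W$ embeds in $S^3$ at all when it sits inside, say, a connected sum of $S^2\times S^1$'s and non-Haken pieces). Second, your proposed lemma that the resulting splitting $N=H\cup_\Sigma J$ has $d_H(D(H),D(J))\le 1$ rests on a mischaracterization of the manifolds in the hypothesis: a closed reducible $N$ with no incompressible surface can have lens spaces, small Seifert fibered spaces, or non-Haken hyperbolic manifolds as prime summands, for which Waldhausen-type uniqueness of Heegaard splittings is unavailable. (In fact, if you really had a Heegaard splitting of a reducible $N$, Haken's lemma would give a reducing sphere and hence $D(H)\cap D(J)\ne\emptyset$, which would yield the bound $2$ rather than $3$ --- a sign that your accounting of where the extra unit comes from does not match the theorem.)

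The paper takes a different and more robust route that you should compare against: it never re-embeds anything and never produces a Heegaard splitting of $N$. It works directly with the generalized Heegaard splitting $V\cup_\Sigma W$ of the submanifold of $N$ cut out by the two maximal compression bodies. If neither $V$ nor $W$ is a product collar and $d(D(V),D(W))>1$, this splitting is strongly irreducible, so Casson--Gordon \cite{CG96} forces $V\cup_\Sigma W$ to be irreducible with incompressible or empty boundary; an empty boundary contradicts the reducibility of $N$, and a nonempty boundary $\partial_-V\cup\partial_-W$ is incompressible in the complement by maximality of $V,W$, contradicting the no-incompressible-surface hypothesis. Hence $d_H(D(V),D(W))\le d(D(V),D(W))\le 1$, and since $\mathrm{diam}_{d_H}(D(V)),\mathrm{diam}_{d_H}(D(W))\le 1$ while Lemma \ref{schm:2.3} puts points of $D(X)$ and $D(Y)$ inside $D(V)\cup D(W)$, the triangle inequality gives $3$. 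The case where one of $V,W$ is a collar is handled separately and gives $1$. If you want to repair your write-up, replace the Fox/Waldhausen machinery with this strong-irreducibility argument; that is precisely the step where the hypotheses on $N$ (reducible, no incompressible surface) are actually used.
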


\begin{proof}
Use ambient Morse theory to produce $\Sigma \subset N\cong N\times 0$ with $X\subset N\times[0,\infty)$ and $Y\subset N\times (-\infty,0]$.  By Lemma \ref{schm:2.3}, there is at least one scc of $D(X)$ and $D(Y)$ compressing in $N$.  (It might be the same curve.)  Let $\Sigma$ separate $N$ as $N=A\cup_\Sigma B$ and let $V\subset A$ and $W\subset B$ be the maximal compression bodies on the two sides of $\Sigma$.  Since $N$ contains no incompressible surface, at most one of $V$ and $W$ is a product collar.

Suppose neither $V$ nor $W$ is a collar.  Then it must follow that the $d$-distance between the sets of compression disks, $d(D(V),D(W))\le 1$.  For otherwise the generalized Heegaard decomposition $V\cup_\Sigma W$ would be strongly irreducible, so the main theorem of C--G \cite{CG96} implies that $V\cup_\Sigma W$ is irreducible and has either incompressible or empty boundary.  This boundary must be non-empty by the reducibility of $N$.  But since $\partial_-V\hookrightarrow (A\backslash V)$ and $\partial_-W\hookrightarrow (B\backslash W)$ are incompressible (by maximality of $V$ and $W$), we would conclude $\partial(V\cup_\Sigma W)\hookrightarrow N$ is incompressible, contradicting the assumption of no incompressible surface.  However, by Lemma \ref{schm:2.3}, we have
$$D(X)\cap (D(V)\cup D(W))\neq\emptyset\neq D(Y)\cap(D(V)\cup D(W))$$

Since $d_H(D(V),D(W))\le d(D(V),D(W))\le 1$ and $\mathrm{diam}_{d_H}(D(V))$, $\mathrm{diam}_{d_H}(D(W))$ $\le 1$, we conclude by the triangle inequality that $d_H(D(X),D(Y))\le 3$.

Now suppose one of $V$ or $W$, say $V$, is a collar.  Then
$$D(X)\cap D(W)\neq\emptyset\neq D(Y)\cap D(W),$$
so $d_H(D(X),D(Y))\le 1$.
\end{proof}

Maher and Schleimer studied \cite{MS} a metric $d_E$ which is clearly quasi-isometric to $d_H$.  $d_E$ is defined by adding a new vertex $h$ to $C(\Sigma)$ for each handlebody $H$, $\partial H=\Sigma$, and adjoining length $=1$ edges between each scc in $D(H)$ and $h$.  They prove that for genus $\Sigma > 2$, $\mathrm{diam}_{d_E} = \infty$. Thus we have:

\begin{theorem}\label{thm:3.4}
$\mathrm{diam}_{d_H}(C(\Sigma))=\infty$, for $\mathrm{genus}(\Sigma)\geq 2$.\qed
\end{theorem}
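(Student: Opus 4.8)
The plan is to obtain Theorem~\ref{thm:3.4} as a direct consequence of the metric comparison with the Maher--Schleimer electrification metric $d_E$, together with their infinite-diameter result quoted in the excerpt. First I would make precise the claim that $d_H$ and $d_E$ are quasi-isometric. On the vertex set $C(\Sigma)$ itself, one checks that $d_E$ restricted to sccs and $d_H$ differ by a bounded multiplicative and additive constant: if two sccs $\alpha,\beta$ both bound disks in a common handlebody $H$, then in $d_E$ they are joined via the cone point $h$ by a path of length $2$, so $d_E(\alpha,\beta)\le 2$ whenever $d_H(\alpha,\beta)\le 1$; conversely any $d_E$-path that passes through cone points can be pushed down to a $d_H$-path, each ``spoke-apex-spoke'' detour through a handlebody vertex costing $1$ in $d_H$ where it cost $2$ in $d_E$. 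Hence $\tfrac12 d_E \le d_H \le d_E$ on $C(\Sigma)$ (constants of this flavor; the exact constants are irrelevant for a diameter statement), so $d_H$-diameter is infinite iff $d_E$-diameter is.

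Next I would invoke the Maher--Schleimer theorem, stated in the excerpt immediately before Theorem~\ref{thm:3.4}, that $\mathrm{diam}_{d_E}(C(\Sigma))=\infty$ for $\mathrm{genus}(\Sigma)\ge 2$ (the excerpt writes ``genus $>2$'' in one place and ``$\ge 2$'' in another; I would cite \cite{MS} for whichever range is actually proved there and note that the quasi-isometry argument transports exactly that range). Combining the two displayed facts gives $\mathrm{diam}_{d_H}(C(\Sigma))=\infty$ in the same genus range, which is the assertion of Theorem~\ref{thm:3.4}.

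The only real content beyond citation is the quasi-isometry, and within that the one direction needing a word of care is $d_H \le C\, d_E$: one must argue that a geodesic (or near-geodesic) in the electrified complex can be replaced by a path lying entirely in $C(\Sigma)$ without increasing length by more than a bounded factor. The point is that whenever a $d_E$-path enters and leaves the apex $h$ of a handlebody $H$ through spokes to sccs $\alpha$ and $\beta\in D(H)$, we may delete the apex and use the single $d_H$-edge $\alpha\sim\beta$ (valid since $\alpha,\beta\in D(H)$), trading two edges for one; a path using each apex at most once then has $d_H$-length at most its $d_E$-length. I expect this bookkeeping — in particular handling consecutive apex visits and verifying one never needs an apex more than once along a shortest path — to be the main (though still routine) obstacle; everything else is immediate from the definitions and the quoted results.
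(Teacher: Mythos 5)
Your proposal is correct and follows essentially the same route as the paper, which proves Theorem \ref{thm:3.4} simply by observing that $d_H$ is quasi-isometric to the Maher--Schleimer electrification metric $d_E$ and quoting their infinite-diameter result; you merely spell out the quasi-isometry constants that the paper dismisses as ``clear.'' Your remark about the genus range ($>2$ versus $\ge 2$) correctly flags an inconsistency in the paper's own wording, and your instinct to cite \cite{MS} for the range actually proved there is the right resolution.
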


Since $\text{diam}_{d_H}(C(\Sigma))=\infty$, Theorems 3.1 and \ref{thm:3.2} obstruct certain---in some sense, 
most---Morse functions, or Heegaard decompositions $M=X\cup_\Sigma Y$ from arising via embedding 
$M\hookrightarrow N\times\mathbb{R}$ for $N = S^3$, or more generally $N$ closed, reducible, 
and containing no incompressible surface.

\section{Appendix}

We did not find a good reference in the literature so provide here a sketch of the well-known folk theorem on ambient Morse theory.

\begin{lemma} \label{ordered}
Let $e:M \to  N\times\mathbb{R}$ be a smooth embedding where $M$ and $N$ are closed $d$-dimensional smooth manifolds and $\mathbb{R}$ the real numbers. $e$ is smoothly isotopic to an embedding $eÕ$ so that projection onto $\mathbb{R}$ is a Morse function enjoying a weak self-indexing property: if critical points $x$ and $y$ have index $p$ and $q$ respectively, and if $p < q$, then $eÕ(x)  < eÕ(y)$.
\end{lemma}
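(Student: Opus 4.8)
The plan is to take a generic smooth isotopy to arrange that $f = \pi_{\mathbb{R}} \circ e$ is Morse, and then to ``float'' critical points past one another along the $\mathbb{R}$-direction until the indices occur in order. First I would invoke the standard transversality/genericity argument: a generic perturbation of $e$ (supported in a small tube, so still an embedding) makes $f$ Morse with distinct critical values. The nontrivial content is the reordering, so the bulk of the proof is a local move that exchanges the heights of two consecutive critical points $x, y$ with critical values $f(x) < f(y)$ and indices $p \ge q$ — we want to push $x$ above $y$, decreasing the number of ``out-of-order'' pairs, and iterate.

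The key geometric input is the following: consider the slab $N \times [a,b]$ with $f(x), f(y) \in (a,b)$ and no other critical values in $[a,b]$. Inside this slab, $M \cap (N \times [a,b])$ is a cobordism carrying exactly two handles, of indices $p$ and $q$. Because $p \ge q$ (in fact we may reduce to $p > q$, and also handle the equal-index case by a small generic tilt that separates the critical values in the direction we want), the ascending sphere of the lower critical point $x$ (dimension $d - p$) and the descending sphere of the upper critical point $y$ (dimension $q$) sit in a common level surface $N \times \{c\}$, $a < c < b$, a manifold of dimension $d$; their dimensions sum to $(d - p) + q \le d - 1 < d$, so after a small isotopy they are disjoint in that level. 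This is exactly the hypothesis needed for the classical handle-rearrangement lemma of Morse theory (cf. Milnor, \emph{Lectures on the h-cobordism theorem}, \S4), which produces an isotopy of the function — equivalently, by the Morse–Smale gradient-flow picture, an isotopy of $M$ inside the slab $N \times [a,b]$, fixed near $N \times \{a,b\}$ — after which $f(y) < f(x)$. Crucially this move is supported inside the slab, so it is realized by an ambient isotopy of $N \times \mathbb{R}$ supported in $N \times [a,b]$, hence it does not disturb the embedding or the other critical points.

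The steps, in order: (1) generically perturb $e$ so $f$ is Morse with distinct critical values; (2) define the ``disorder'' of $f$ as the number of pairs of critical points $(x,y)$ with $f(x) < f(y)$ but $\mathrm{index}(x) > \mathrm{index}(y)$ (with the convention that equal-index pairs may be separated either way and contribute nothing), and observe that if the disorder is positive there is an \emph{adjacent} such pair, i.e. one with no critical value strictly between $f(x)$ and $f(y)$; (3) apply the slab move above to that adjacent pair, using the dimension count $(d-p) + q \le d-1$ to disjoin the relevant ascending/descending spheres in an intermediate level and invoke the handle-rearrangement lemma, realizing the rearrangement as an ambient isotopy supported in the thin slab; (4) note the disorder strictly decreases, and iterate finitely many times to reach $e'$ with the weak self-indexing property. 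A cosmetic final perturbation restores distinctness of critical values if the rearrangement collided two of them.

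The main obstacle — or rather the main point requiring care — is step (3): one must be sure the handle-slide/rearrangement is carried out \emph{ambiently}, i.e. that one moves $M$ by an isotopy of $N \times \mathbb{R}$ rather than merely changing the abstract Morse function on $M$. This is where the embedding $e: M \hookrightarrow N \times \mathbb{R}$ is genuinely used: the gradient flow of $f$ on $M$ with respect to a metric for which the ascending and descending spheres have been made transverse and disjoint in an intermediate level furnishes the required product structure on the slab, and sliding $M$ along the ``vertical'' $\mathbb{R}$-lines of $N \times \mathbb{R}$ inside this product neighborhood — keeping things fixed near the top and bottom of the slab — is an honest ambient isotopy. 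The index inequality $p \ge q$ is exactly what makes the relevant spheres low-dimensional enough to be separated, so no linking obstruction can appear; this is why only the \emph{weak} self-indexing statement (order by index, not a strict normal form) is claimed, and it is all that the applications in Section 3 require. Everything else is routine, and since the statement is a well-known folk theorem we present only this sketch.
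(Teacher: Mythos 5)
Your steps (1) and (2) are fine, but step (3) contains the real gap: the assertion that the handle rearrangement ``is realized by an ambient isotopy of $N\times\mathbb{R}$ supported in $N\times[a,b]$'' is precisely the hard point, and nothing in your argument establishes it. Milnor's rearrangement lemma is an \emph{intrinsic} statement: it modifies the Morse function on the abstract cobordism $W=M\cap(N\times[a,b])$ using the gradient flow of $f|_M$ and a dimension count on the ascending/descending \emph{spheres of the two critical points inside a level set of $f|_M$}. That count says nothing about the rest of $M$ sitting in the slab. To realize the new function as the height function of an isotoped embedding you must move a neighborhood of $x$ (together with part of its ascending/descending manifold) vertically past the height of $y$, and the obstruction to doing so is collision with \emph{other sheets of $e(M)$} having the same $N$-coordinate --- points of $M$ entirely unrelated to the two handles being swapped. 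Your appeal to ``the product structure furnished by the gradient flow'' conflates two different things: the gradient flow of $f|_M$ lives in $M$ and its trajectories do not have constant $N$-coordinate, while the vertical lines of $N\times\mathbb{R}$ along which you propose to slide generally leave $M$ immediately. A sanity check that something must be wrong: with the correct sphere dimensions (the spheres live in the $(d-1)$-dimensional level set of $f|_M$ and have dimensions $d-p-1$ and $q-1$), the disjunction argument works even for $p=q$, and Milnor's theorem does let you rearrange same-index critical points abstractly; yet ambiently this is false --- nested cups (two index-$0$ minima, one stacked inside the other) cannot have their heights exchanged by any isotopy of the embedding. So the ambient realization cannot follow formally from the intrinsic rearrangement lemma; some genuinely ambient input is required, and your proof never supplies it.

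The paper's proof supplies exactly that input. It works directly in $N\times\mathbb{R}$: to lower an index-$i$ critical point it pushes its descending manifold straight down along vertical lines and applies general position \emph{in the ambient $(d+1)$-manifold} to the $(i+1)$-dimensional track of the push against the $(d-j)$-dimensional ascending manifolds of the other critical points; the intersection dimension $(i+1)+(d-j)-(d+1)=i-j$ is negative precisely when $j>i$, so the push can avoid all higher-index critical points, and when it unavoidably meets a sheet it is bent to skim along that sheet, stopping short of any same-index critical point in the way. This is why the conclusion is only the \emph{weak} ordering and why same-index nesting survives --- a distinction your argument does not see, because the inequality $p>q$ enters your proof only through a sphere-dimension count internal to $M$, not through any ambient general-position statement. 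To repair your approach you would need to replace the claimed ``vertical slide in the slab'' with an honest ambient general-position argument of this kind, at which point you have essentially reconstructed the paper's proof.
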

\begin{proof}
The composition $M \to N\times\mathbb{R} \to \mathbb{R}$ may be perturbed to be a Morse function \cite[\textsection 6]{Milnor} and choosing such a perturbation small enough we may use it to redefine the last coordinate of $e$ while keeping the embedded property; call the result $e''$. $e''$ obeys the first conclusion of the lemma; we need now to order the critical points.
To do this mark the ascending manifold (with respect to gradient of the composition) of each critical point $p$. If $p$ has index $i$ then this ascending manifold is an (improperly) embedded copy of $\mathbb{R}^{d-i}$.  We now construct the required isotopy starting with critical points of index zero. Leave the lowest valued index zero points alone call their value (or ÒheightÓ) $0$. We wish to push any higher valued index zero points down near $0$, proceeding in order of height. Pushing such a point straight down, we may encounter another sheet of $e''$. By general position we can perturb the downward push so that we do not encounter any ascending manifold (AM) except possibly the AM of an index zero critical point. Now bend the push to skim above the sheet until the index zero critical point (CP) generating the AM is nearly reached and stop the push there.  At this point all we have done is isotoped $e''$ (but do not change the notation) to make all the index zero critical points have lower values that the higher index CPÕs.   
Now turn to any lowest valued index $1$ CPÕs and push it down below higher index critical points. Now, again in height order, treat the other index $1$ critical points. ``treating'' means that we take the descending manifold (DM) ( an arc in this case) and push it, relative its ends, straight down until it nears another sheet of $e''$. We are concerned about the DM meeting an AM of another critical point of index $> 0$. (We are not concerned about the AMs of index $0$ CPÕs because we will not push the DM that low). Again by general position we only need encounter AM of critical points of index $1$ and when this happens the push, again bent to skim the sheet following the gradient, will need to stop just before a collision occurs with a lower index $1$ critical point. This push can and should be done so that the height function along the DM (arc) retains the property of having a single CP. This means that as we proceed the number of critical points is never changed after the initial perturbation to $e''$. Now we have all the critical points on index $0$ and $1$ arrange with the desired monotonicity.
The preceding paragraph is applicable verbatim to critical points of index $2$, then $3$, É, then $d-1$. 
It is not required to touch the neighborhoods of the local maxima once the lower index CPÕs are appropriately ordered.

Notice that the proof does {\it not} allow us to reorder to our pleasure critical values within a fixed index: there is a very real phenomenon of ÒnestingÓ of index $k$ critical points and as a general rule that order cannot be rearranged by isotopy. Imagine stacked cups or saddles or chips or hats, which cannot be wiggled to be rearranged in height (otherwise the stack would tumble down). If is worth mentioning here that the same lemma (and essentially the same proof) still holds if $dim N > dim M$.  However in this case ( codimension-$2$ and higher embeddings) the ordering of critical values may be arbitrarily rearranged in the isotopy: the nesting disappears.
\end{proof}

\section*{Acknowledgements}

\noindent
The first author acknowledges support by NSF grant DMS-1105738.

\vspace{1em}
\noindent
The second author would like to acknowledge stimulating discussion with Cameron Gordon on this subject.

\begin{bibdiv}
\begin{biblist}

\bib{Budney}{article}{
    title = {Embeddings of 3-manifolds in $S^4$ from the point of view of the 11-tetrahedron census},
    author = {Ryan Budney},
    author = {Benjamin Burton},
    year = {2008},
    eprint = {arXiv:0810.2346}
}

\bib{CG96}{article}{
    title={Reducing heegaard splittings},
    author={Casson, Andrew},
    author = {Gordon, Cameron},
    journal={Topology and its Applications},
    volume={27},
    number={3},
    pages={275-283},
    year={1987}
}

\bib{Fox98}{article}{
    title={On the imbedding of polyhedra in 3-space},
    author={Fox, Ralph H.},
    journal={Annals of Mathematics},
    pages={462-470},
    year={1948}
}

\bib{Harvey}{article}{

       TITLE = {Boundary structure of the modular group},
         AUTHOR = {Harvey, W. J.},

 BOOKTITLE = {Riemann surfaces and related topics: {P}roceedings of the 1978
              {S}tony {B}rook {C}onference ({S}tate {U}niv. {N}ew {Y}ork,
              {S}tony {B}rook, {N}.{Y}., 1978)},
    SERIES = {Ann. of Math. Stud.},
    VOLUME = {97},
     PAGES = {245--251},
 PUBLISHER = {Princeton Univ. Press, Princeton, N.J.},
      YEAR = {1981}
}

\bib{Hempel}{article}{
    title={3-manifolds as viewed from the curve complex},
    author={Hempel, John},
    journal={Topology},
    volume={40},
    number={3},
    pages={631-657},
    year={2001}
}

\bib{MS}{article}{
    title = {The compression body graph has infinite diameter},
    author = {Joseph Maher},
    author = {Saul Schleimer},
    year = {2018},
    eprint = {arXiv:1803.06065}
}

\bib{Milnor}{book}{
    AUTHOR = {Milnor, J.},
     TITLE = {Morse theory},
    SERIES = {Based on lecture notes by M. Spivak and R. Wells. Annals of
              Mathematics Studies, No. 51},
 PUBLISHER = {Princeton University Press, Princeton, N.J.},
      YEAR = {1963},
     PAGES = {vi+153},
  MRNUMBER = {0163331}
  }

\bib{Waldhausen}{article}{
    title={Heegaard-Zerlegungen der 3-sph{\"a}re},
    author={Waldhausen, Friedhelm},
    journal={Topology},
    volume={7},
    number={2},
  year={1968}
}

\end{biblist}
\end{bibdiv}
\end{document}